\documentclass[a4paper,12pt,final]{amsart}
\usepackage{times,a4wide,mathrsfs}
\usepackage{amsmath,amssymb}

\newcommand{\D}{\mathbb{D}}
\newcommand{\RR}{\mathbb{R}}
\newcommand{\C}{\mathbb{C}}
\newcommand{\T}{\mathbb{T}}

\newcommand{\Cl}{{\mathscr{C}}}
\newcommand{\Hi}{{\mathscr{H}}^\infty}
\newcommand{\Ht}{{\mathscr{H}}^2}
\newcommand{\Hp}{{\mathscr{H}}^p}
\newcommand{\Ha}{{\mathscr{H}}^{2^{\alpha+1}}}
\newcommand{\Hab}{{\mathscr{H}}^{2^{[\alpha]+2}(2+[\alpha]-\alpha)^{-1}}}
\newcommand{\Da}{{\mathscr{D}}_\alpha}
\newcommand{\Din}{{\mathscr{D}}_\infty}
\newcommand{\Dx}{{\mathscr{D}}_x}

\newtheorem{theorem}{Theorem}
\newtheorem*{theorem1p}{Theorem 1'}
\newtheorem*{theorem2p}{Theorem 2'}
\newtheorem{lemma}{Lemma}

\begin{document}

\title[Zeros of functions in Hilbert spaces of Dirichlet series]{Zeros of
functions in Hilbert spaces of Dirichlet series}




\author[Kristian Seip]{Kristian Seip}
\address{Department of Mathematical Sciences, Norwegian University of Science and Technology,
NO-7491 Trondheim, Norway} \email{seip@math.ntnu.no}
\thanks{The author is supported by the Research Council of Norway
grant 160192/V30.}

\subjclass[2000]{30B50, 30C15, 30H10.}


\begin{abstract}
The Dirichlet--Hardy space $\Ht$ consists of those Dirichlet series
$\sum_n a_n n^{-s}$ for which $\sum_n |a_n|^2<\infty$. It is shown
that the Blaschke condition in the half-plane $\operatorname{Re}
s>1/2$ is a necessary and sufficient condition for the existence of
a nontrivial function $f$ in $\Ht$ vanishing on a given bounded
sequence. The proof implies in fact a stronger result: every
function in the Hardy space $H^2$ of the half-plane
$\operatorname{Re} s>1/2$ can be interpolated by a function in $\Ht$
on such a Blaschke sequence. Analogous results are proved for the
Hilbert space $\Da$ of Dirichlet series $\sum_n a_n n^{-s}$ for
which $\sum_n |a_n|^2[d(n)]^\alpha <\infty$; here $d(n)$ is the
divisor function and $\alpha$ a positive parameter. In this case,
the zero sets are related locally to the zeros of functions in
weighted Dirichlet spaces of the half-plane $\operatorname{Re}
s>1/2$. Partial results are then obtained for the zeros of functions
in $\Hp$ ($L^p$ analogues of $\Ht$) for $2<p<\infty$, based on
certain contractive embeddings of $\Da$ in $\Hp$.

\end{abstract}

\maketitle

\section{Introduction}

This paper studies the zeros of functions in certain Hilbert spaces
of Dirichlet series. The prototypical case is that of the
Dirichlet--Hardy space
\[ \Ht=\left\{f(s)=\sum_{n=1}^\infty a_n n^{-s}:\
\|f\|_{\Ht}^2=\sum_{n=1}^\infty |a_n|^2<\infty\right\},\] which we
view as a space of analytic functions in the half-plane
$\C_{1/2}^+=\{s=\sigma+it:\ \sigma>1/2\}$. Our result regarding
$\Ht$ is as follows.
\begin{theorem}
Suppose $S=(\sigma_j+it_j)$ is a bounded sequence of points in
$\C_{1/2}^+$. Then there is a nontrivial function in $\Ht$ vanishing
on $S$ if and only if $\sum_j (\sigma_j-1/2)<\infty$.
\end{theorem}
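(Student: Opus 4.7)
My plan is to handle the two directions separately; sufficiency is the main work.

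For \emph{necessity}, suppose $f\in\Ht$ is nonzero and vanishes on $S$. Cauchy--Schwarz applied to $f(s)=\sum_n a_n n^{-s}$ gives the pointwise bound $|f(\sigma+it)|\le\|f\|_{\Ht}\sqrt{\zeta(2\sigma)}$ for $\sigma>1/2$, so $f$ is bounded analytic on every half-plane $\operatorname{Re} s\ge\sigma_1>1/2$ and in particular lies in the Nevanlinna class there. Since the $t_j$ are bounded, $S$ sits inside a rectangle $R=(1/2,\sigma_0)\times(-T,T)$; I plan to combine the pointwise estimate with a local $L^2$ bound of Hilbert--Montgomery--Vaughan type for Dirichlet series on vertical segments near $\operatorname{Re} s=1/2$ to place $f$ in a Hardy-type space on $R$. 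The classical Blaschke condition on such a rectangle, coupled with the boundedness of the $t_j$, then yields $\sum_j(\sigma_j-1/2)<\infty$.

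For \emph{sufficiency}, I aim directly for the stronger interpolation statement advertised in the abstract: for every $g\in H^2(\C_{1/2}^+)$ there exists $f\in\Ht$ with $f(s_j)=g(s_j)$ for all $j$. A nontrivial vanishing function then follows at once: fix $s_0\in\C_{1/2}^+\setminus S$, so that $S'=S\cup\{s_0\}$ is still a bounded Blaschke sequence, and combine the $H^2(\C_{1/2}^+)$ Blaschke product $B_S$ with a reproducing-kernel factor to produce $g\in H^2(\C_{1/2}^+)$ with $g|_S=0$ and $g(s_0)=1$; the interpolation theorem applied to $S'$ then delivers $f\in\Ht$ with $f|_S=0$ and $f(s_0)=1\ne 0$.

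The interpolation theorem itself I approach by comparing the two reproducing kernels. The $\Ht$-kernel $\zeta(s+\bar w)$ and the $H^2(\C_{1/2}^+)$-kernel $(2\pi(s+\bar w-1))^{-1}$ share the same pole at $s+\bar w=1$, since $\zeta(z)-(z-1)^{-1}$ is entire; because $S$ lies in a bounded region of $\C_{1/2}^+$, the two Gram matrices $[\zeta(s_j+\bar s_k)]$ and $[(2\pi(s_j+\bar s_k-1))^{-1}]$ differ by a uniformly bounded correction. Standard half-plane Carleson-measure techniques identify the image of the $H^2(\C_{1/2}^+)$-restriction to $S$ with a concrete weighted $\ell^2$-space under the Blaschke condition, and the task reduces to showing that the image of the $\Ht$-restriction contains it, with a bounded lifting.

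The \emph{main obstacle} is that the correction $\zeta(z)-(z-1)^{-1}$ is not small compared with the pole, so the transfer from $H^2(\C_{1/2}^+)$ to $\Ht$ cannot proceed by a naive perturbation of the Gram matrix. My plan for overcoming this is to exploit the multiplicative structure of $\Ht$ through its Bohr lift to $H^2(\T^\infty)$, building an explicit Dirichlet-series ``compensator'' that converts an $H^2(\C_{1/2}^+)$-interpolant into a genuine element of $\Ht$; the Blaschke condition should supply the convergence needed for this construction.
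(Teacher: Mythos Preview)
Your necessity argument is fine in spirit, though more work than needed: the paper simply notes that $f(s)/s\in H^2(\C_{1/2}^+)$ whenever $f\in\Ht$, which immediately forces the Blaschke condition on any bounded zero sequence.

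The sufficiency plan, however, has a genuine gap. Your claim that ``standard half-plane Carleson-measure techniques identify the image of the $H^2(\C_{1/2}^+)$-restriction to $S$ with a concrete weighted $\ell^2$-space under the Blaschke condition'' is only correct when $S$ is an \emph{interpolating} (Carleson) sequence. For a general bounded Blaschke sequence the restriction image is the model space $H^2\ominus B H^2$, and its norm is governed by the Gram matrix $\bigl[(s_j+\bar s_k-1)^{-1}\bigr]$, which need not be bounded on $\ell^2$ and is certainly not equivalent to a diagonal weight. Consequently the observation that $\zeta(z)-(z-1)^{-1}$ is entire does not let you pass from one Gram matrix to the other by a bounded perturbation: both matrices may be unbounded, and boundedness of their difference says nothing about containment of the restriction images. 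The kernel-comparison route you sketch is precisely what underlies the \emph{local interpolation} theorem for $\Ht$ on Carleson sequences in \cite{Ose}, but it does not extend to arbitrary Blaschke sequences. Your fallback (``Bohr lift compensator'') is too vague to fill this hole; indeed the paper stresses that one cannot divide out zeros in $\Ht$ and that the multiplier algebra is only $\Hi$, so no multiplicative correction within $\Ht$ is available.

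The paper's argument is of a completely different nature and does not touch reproducing kernels. One first approximates an arbitrary $f\in E_N H^2(\C_{1/2}^+)$ (where $E_N(s)=N^{-s+1/2}$) by a Dirichlet series $F\in\Ht$ via a discretization of the Laplace representation, with a pointwise error $|\Phi(s)|\lesssim |s-1/2|N^{-\sigma-1/2}\|f\|$. The error $\Phi$ is then forced to vanish on $S$ by solving $\overline{\partial}u=\overline{\partial}(\Theta\Phi)/(BE_N)$ with a smooth cutoff $\Theta$ and setting $T_Nf=\Theta\Phi-BE_Nu$; because $|B|$ is bounded below where $\nabla\Theta\neq 0$, one gets $\|T_N\|\le C/N$ on $E_N H^2(\C_{1/2}^+)$. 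Starting from $f_0=BE_N/(s+1/2)$ and iterating $f_{j+1}=T_Nf_j$, the sum $F=\sum_j F_j$ of the successive Dirichlet approximations converges in $\Ht$ for $N$ large and vanishes on $S$. The Blaschke product enters only as a divisor in the $\overline{\partial}$ data, never as a multiplier on $\Ht$.
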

Here multiple zeros are accounted for in the usual way: $n$
occurrences of some point in the sequence $S$ correspond to a zero
of order at least $n$ at that point. Note that the summability
condition on $S$ is just the Blaschke condition for bounded
sequences in $\C_{1/2}^+$.

The ``only if'' part of Theorem~1 is well-known \cite{HLS}; it is a
consequence of the fact that if $f$ is in $\Ht$, then $f(s)/s$ is in
$H^2(\C_{1/2}^+)$. The novelty of Theorem~1 is thus the positive
direction, i.e., the sufficiency of the local Blaschke condition.
This result may come as no surprise to readers familiar with recent
developments such as \cite{Ose} and \cite{OS}, which quantify the
local resemblance between $\Ht$ and the Hardy space
$H^2(\C_{1/2}^+)$. It should be kept in mind, however, that in the
present setting the usual one-variable tool of dividing out zeros
must be abandoned, and no direct use of Blaschke products is
possible. Moreover, as shown in \cite{HLS}, the multiplier algebra
of $\Ht$ is the much smaller space $\Hi$ consisting of those
functions in $\Ht$ that extend to bounded holomorphic functions in
the larger half-plane $\C^+=\{s=\sigma+it:\ \sigma>0\}$. These facts
point to a certain rigidity of our problem.

A simple argument using almost periodicity of the function $t\mapsto
f(\sigma+it)$ along with Rouch\'{e}'s theorem \cite{Ose} shows that
a function in $\Ht$ has either none or an infinite number of zeros.
It appears to be an inaccessible problem to describe the pattern of
``repetitions'' of zeros in the vertical direction, and therefore a
full description of the zero sets of functions in $\Ht$ does not
seem to be within reach.

The main ingredient in our construction of a nontrivial function in
$\Ht$ vanishing on a bounded Blaschke sequence is an iteration
involving approximations by Dirichlet series on compact sets and
solutions to certain $\overline{\partial}$ equations. We will
present this proof in the next section, where we also notice that
our argument implies a stronger result: The values of an arbitrary
function in $H^2(\C_{1/2}^+)$ can be interpolated by a function in
$\Ht$ on a bounded Blaschke sequence.

Suitably elaborated, the same technique leads to analogous results
for most of the Hilbert spaces studied in \cite{O}. In Section 3, we
choose to present these results for the Hilbert spaces
\[ \Da=\left\{f(s)=\sum_{n=1}^\infty a_n n^{-s}:\
\|f\|_{\Da}^2=\sum_{n=1}^\infty
|a_n|^2[d(n)]^{\alpha}<\infty\right\},\] where $d(n)$ is the divisor
function and $\alpha$ a real parameter; we define $\Din$ as the
subspace of $\Ht$ consisting of those $\sum_n a_n n^{-s}$ for which
$a_n=0$ unless $n=1$ or $n$ is a prime . We will restrict to the
case $0<\alpha\le \infty$. Our reason for doing so is that this
leads to nontrivial results for the zeros of functions in the
Dirichlet--Hardy spaces $\Hp$ for $p>2$, to be considered in Section
4.

\section{Proof of Theorem~1}

The Hardy space $H^2(\C_{1/2}^+)$ is defined as the set of functions
$f$ analytic in $\C_{1/2}^+$ for which
\[ \| f \|_{H^2(\C_{1/2}^+)}^2=\sup_{\sigma>1/2} \int_{-\infty}^{\infty} |f(\sigma+it)|^2 dt
<\infty. \] Every $f$ in $H^2(\C_{1/2}^+)$ has a nontangential
boundary limit at almost every point of the vertical line
$\sigma=1/2$, and the corresponding limit function $t\mapsto
f(1/2+it)$ is in $L^2(\RR)$; the $L^2$ norm of this function
coincides with the $H^2$ norm introduced above. We may represent $f$
as
\[ f(s)=\int_{0}^\infty \varphi(\xi) e^{-(s-1/2) \xi} d\xi \]
so that by the Plancherel identity $\| f
\|_{H^2(\C_{1/2}^+)}=\sqrt{2\pi} \|\varphi\|_2$. By an appropriate
discretization of the integral in this representation, we obtain an
approximation on compact sets of functions in $H^2(\C_{1/2}^2)$ by
functions in $\Ht$:

\begin{lemma} \label{lem0} Let $N$ be a positive integer. Then for every $\varphi$
in $L^2(\log N,\infty)$, there is a function $F(s)=\sum_{n=N}^\infty
a_n n^{-s}$ in $\Ht$ such that $\|F\|_{\Ht}\le \|\varphi\|_2$ and
the function
\[ \Phi(s)=\int_{\log N}^\infty \varphi(\xi) e^{-(s-1/2) \xi} d\xi - F(s) \]
enjoys the estimate
\[
|\Phi(s)|\le 2 |s-1/2| N^{-\sigma-1/2} \|\varphi\|_2\] for $s$ in
$\C^+_{1/2}$.
\end{lemma}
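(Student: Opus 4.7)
The plan is a Riemann-sum discretization of the integral representation, adapted to the multiplicative scale native to Dirichlet series. I would partition the half-line $[\log N,\infty)$ at the breakpoints $\xi=\log n$, $n\ge N$, and on each piece $[\log n,\log(n+1))$ replace $e^{-(s-1/2)\xi}$ by its value at the \emph{left} endpoint, namely $e^{-(s-1/2)\log n}=n^{1/2-s}$. Setting
\[
a_n := n^{1/2}\int_{\log n}^{\log(n+1)} \varphi(\xi)\,d\xi \qquad (n\ge N),
\]
the Riemann sum collapses to the Dirichlet series $F(s)=\sum_{n=N}^{\infty} a_n n^{-s}$, while $\Phi(s)$ becomes the sum of the local errors
\[
\Phi(s)=\sum_{n=N}^{\infty} \int_{\log n}^{\log(n+1)} \varphi(\xi)\bigl[e^{-(s-1/2)\xi}-e^{-(s-1/2)\log n}\bigr]\,d\xi.
\]

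For the norm bound I would apply the Cauchy--Schwarz inequality inside the integral defining $a_n$ to get $|a_n|^2\le n\log(1+1/n)\int_{\log n}^{\log(n+1)}|\varphi|^2\,d\xi$, and since $n\log(1+1/n)\le 1$, summing in $n$ immediately yields $\|F\|_{\Ht}^2\le\|\varphi\|_2^2$.

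For the pointwise estimate on $\Phi(s)$, the key ingredient is the elementary bound
\[
\bigl|e^{-(s-1/2)\xi}-e^{-(s-1/2)\log n}\bigr|\le |s-1/2|\,(\xi-\log n)\,n^{1/2-\sigma},
\]
valid for $\xi\in[\log n,\log(n+1)]$ and $\sigma>1/2$; the factor $n^{1/2-\sigma}$ appears because $e^{-(\sigma-1/2)\xi}$ is decreasing and hence attains its maximum over the subinterval at the left endpoint. Cauchy--Schwarz inside each subinterval (against the weight $\xi-\log n$, whose $L^2$-norm on $[\log n,\log(n+1)]$ is $[\log(1+1/n)]^{3/2}/\sqrt{3}$) followed by Cauchy--Schwarz across $n$ reduces matters to controlling
\[
\sum_{n=N}^{\infty} n^{1-2\sigma}\bigl[\log(1+1/n)\bigr]^{3}\le \sum_{n=N}^{\infty}n^{-2-2\sigma},
\]
and a routine integral comparison bounds the latter by $\tfrac{3}{2}\,N^{-1-2\sigma}$ for $\sigma\ge 1/2$. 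Collecting the constants gives $|\Phi(s)|\le (|s-1/2|/\sqrt{2})\,N^{-\sigma-1/2}\|\varphi\|_2$, well within the prescribed factor of $2$.

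There is no serious obstacle here; the one choice that has to be made judiciously is anchoring the exponential at the \emph{left} endpoint of each subinterval, since it is precisely that choice which simultaneously produces the Dirichlet-series form $a_n n^{-s}$ and maximizes the modulus of the exponential over the subinterval, making both the norm bound with constant one and the decay factor $N^{-\sigma-1/2}$ fall out cleanly.
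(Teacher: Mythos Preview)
Your argument is correct and matches the paper's approach exactly: the paper sets $a_n=\sqrt{n}\int_{\log n}^{\log(n+1)}\varphi(\xi)\,d\xi$ and then simply says ``perform straightforward estimates,'' which is precisely the Cauchy--Schwarz computation you have written out. Your detailed constant-tracking (yielding $1/\sqrt{2}$ rather than $2$) is a welcome elaboration of what the paper leaves implicit.
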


\begin{proof}
We set
\[ a_n=\sqrt{n} \int_{\log n}^{\log(n+1)} \varphi(\xi) d\xi \]
and perform straightforward estimates.
\end{proof}

Note that a version of Lemma~\ref{lem0} was used iteratively by
J.-F.~Olsen in \cite[pp. 109--112]{O9}, in a similar way as will be
done below, to find a new proof of the interpolation theorem for
$\Ht$ first proved in \cite{Ose}. The desired Dirichlet series will
in our case be an infinite series of functions in $\Ht$, where each
term is determined by an application of Lemma~\ref{lem0} and
convergence is ensured when $N$ is sufficiently large.

We turn to the second essential ingredient in our iteration. Set
\[ \Omega=\Omega(R, \tau)=\{s=\sigma+it:\ 1/2 \le \sigma\le 1/2 +\tau, \ -R \le t \le R\},
\]
for positive numbers $R$ and $\tau$. Lebesgue area measure on $\C$
is denoted by $m$. The following lemma has an obvious proof which is
omitted.
\begin{lemma}
Suppose $g$ is a continuous function on $\C_{1/2}^+$ supported on
$\Omega=\Omega(R, 2)$ and satisfying $|g(s)|\le \epsilon$. Then
\[ u(s)=\frac{1}{\pi}\int_{\Omega} \frac{g(w)}{s-w} dm(w) \]
solves $\overline{\partial}u=g$ in $\C_{1/2}^+$ with bounds $ \|
u\|_\infty \le c \epsilon \log R $ for an absolute constant $c$
(independent of $R$) and
\[ |u(s)|\le \frac{R \epsilon}{\pi \operatorname{dist}(s,\Omega)}. \]
\label{lem1}
\end{lemma}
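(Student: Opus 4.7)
The plan is to recognize the formula for $u$ as the Cauchy--Green (Pompeiu) integral, whose role as a right inverse to $\overline{\partial}$ on continuous compactly supported data is classical (coming from the fact that $1/(\pi z)$ is a fundamental solution for $\overline{\partial}$). Since $g$ is continuous with support in the compact rectangle $\Omega\subset\C_{1/2}^+$, the integral converges absolutely and $\overline{\partial}u=g$ holds throughout the half-plane. All that remains is to establish the two quantitative estimates.

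For the bound at points away from $\Omega$, a one-line argument suffices: for such $s$ every $w\in\Omega$ satisfies $|s-w|\geq\operatorname{dist}(s,\Omega)$, and since $m(\Omega)=4R$ and $|g|\leq\epsilon$,
\[ |u(s)| \le \frac{1}{\pi}\cdot\frac{\epsilon\,m(\Omega)}{\operatorname{dist}(s,\Omega)} = \frac{4R\epsilon}{\pi\operatorname{dist}(s,\Omega)}, \]
which is the asserted inequality up to an absolute constant.

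For the sup-norm bound, the main step is to show
\[ \int_{\Omega}\frac{dm(w)}{|s-w|} \le C\log R \]
for every $s\in\C_{1/2}^+$ with $C$ an absolute constant. After the translation $z=w-s$, one integrates $1/|z|$ over a rectangle of width at most $2$ (the parameter $\tau$) and height at most $2R$. Writing $z=x+iy$ and applying Fubini, for each fixed $x$ with $|x|\leq 2$ the inner $y$-integral is at most $2\operatorname{arcsinh}(R/|x|)\asymp \log(R/|x|)$, and the outer integral of $\log(R/|x|)$ over $x\in[-2,2]$ is of order $\log R$. Multiplying by $\epsilon/\pi$ gives $\|u\|_\infty\le c\epsilon\log R$ with $c$ absolute.

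The geometric fact making all of this work, which I expect to be the only subtle point, is that $\Omega$ is \emph{thin} in the real direction (bounded width $\tau=2$) but long in the imaginary direction (length $2R$). The bounded width confines the singularity of $1/|s-w|$ to a strip and produces only a logarithm in the long direction rather than a polynomial factor of $R$; without the uniform bound on $\tau$, one could not hope for a logarithmic estimate.
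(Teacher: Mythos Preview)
Your proof is correct and supplies exactly the standard details; the paper in fact omits the proof entirely, declaring the lemma obvious. One minor point: since $m(\Omega(R,2))=4R$, the trivial area bound yields $4R\epsilon/(\pi\operatorname{dist}(s,\Omega))$ rather than the paper's stated $R\epsilon/(\pi\operatorname{dist}(s,\Omega))$, but this factor of $4$ is immaterial for every subsequent use of the lemma.
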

It follows that \[ \sup_{\sigma\ge 1/2} \left(\int_{-\infty}^\infty
|u(\sigma+it)|^2 dt\right)^{1/2} \le c' \epsilon \sqrt{R} \log R \]
for an absolute constant $c'$.

Let now $S=(\sigma_j+it_j)$ be a bounded Blaschke sequence in
$\C_{1/2}^+$ with an associated Blaschke product $B$. We may assume
that $S$ is in $\Omega(R-2, 1/2)$ for some $R>2$. We fix once and
for all a smooth function $\Theta$ on the closed half-plane $\sigma\ge 1/2$ with the following properties: $\Theta$  is supported on $\Omega(R,2)$ such
that $\Theta(s)=1$ for $s$ in $\Omega(R-1,1)$ and $|\nabla
\Theta|\le 2$. For a given positive integer $N$, we set
$E_N(s)=N^{-s+1/2}$ and define an operator $T_N$ on $E_N
H^2(\C_{1/2}^+)$ as follows. Set
\[ f(s)=\int_{\log N}^\infty \varphi(\xi) e^{-(s-1/2)\xi} d\xi \]
and $\Phi=f-F$, where $F$ is as in Lemma~\ref{lem0}, and let $u$
denote the solution from Lemma~\ref{lem1} to the equation
\[ \overline{\partial} u = \frac{\overline{\partial}(\Theta \Phi)}{B E_N}.\]
Then set
\[ T_Nf=\Theta \Phi-BE_N u.\]
It is clear that $T_Nf$ is in $E_N H^2(\C_{1/2}^+)$ since $\Theta$
has compact support. The virtue of $T_N$ is that $T_N f(s)= \Phi(s)$
for $s$ in $S$, i.e., $T_Nf-\Phi$ is divisible by $B$.

The following estimate is crucial.
\begin{lemma}\label{crucial1}
The operator $T_N$ on $E_N H^2(\C_{1/2}^+)$ enjoys the estimate $\|
T_N\|\le C N^{-1},$ where $C$ is a constant depending only on $R$
and $S$.
\end{lemma}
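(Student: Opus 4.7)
The plan is to follow the construction step by step, bounding each piece in terms of the fundamental quantity $\|\varphi\|_2$, which is comparable to $\|f\|_{H^2(\C_{1/2}^+)}$ by Plancherel. Given $f\in E_N H^2(\C_{1/2}^+)$ represented by $\varphi\in L^2(\log N,\infty)$, decompose $f=F+\Phi$ via Lemma~\ref{lem0}; the function $\Phi$ is then holomorphic on $\C_{1/2}^+$ and satisfies the pointwise estimate $|\Phi(s)|\le 2|s-1/2|N^{-\sigma-1/2}\|\varphi\|_2$.

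The first main step is a pointwise bound on the datum $g:=\overline{\partial}(\Theta\Phi)/(BE_N)$. Because $\Phi$ is holomorphic, $\overline{\partial}(\Theta\Phi)=\Phi\,\overline{\partial}\Theta$, which is supported on the shell $\Omega(R,2)\setminus\Omega(R-1,1)$. On that shell we have $|\overline{\partial}\Theta|\le|\nabla\Theta|\le 2$; the Blaschke product satisfies $|B(s)|\ge c_B>0$ since the shell is separated from $S\subset\Omega(R-2,1/2)$; and $|s-1/2|\le\sqrt{R^2+4}$. The key observation is that the factor $|E_N(s)|^{-1}=N^{\sigma-1/2}$ telescopes exactly with the $N^{-\sigma-1/2}$ in the bound on $\Phi$, so that
\[ |g(s)|\le C_{R,S}\,N^{-1}\|\varphi\|_2, \]
with $C_{R,S}$ depending only on $R$ and $c_B$.

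The second step is to feed this estimate into Lemma~\ref{lem1} with $\epsilon=C_{R,S}N^{-1}\|\varphi\|_2$. The $H^2$-type bound displayed immediately after Lemma~\ref{lem1} then yields
\[ \sup_{\sigma\ge 1/2}\Bigl(\int_{-\infty}^\infty |u(\sigma+it)|^2\,dt\Bigr)^{1/2}\le C'_{R,S}\,N^{-1}\|\varphi\|_2. \]
Since $T_Nf=\Theta\Phi-BE_Nu$ is analytic on $\C_{1/2}^+$ by the construction of $u$, its $H^2$ norm coincides with the supremum over vertical lines $\sigma=\mathrm{const}$ of the $L^2$ norms, and by the triangle inequality this is majorized by $\|\Theta\Phi(\sigma+\cdot)\|_{L^2}+\|(BE_Nu)(\sigma+\cdot)\|_{L^2}$. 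The first term is controlled directly by the pointwise bound on $\Phi$ together with the compact $t$-support of $\Theta$; the second uses $|B|\le 1$ and $|E_N|\le 1$ on the closed half-plane, combined with the displayed bound on $u$. Both contribute at most $C_{R,S}N^{-1}\|\varphi\|_2$, and comparing with $\|f\|_{H^2}\sim\|\varphi\|_2$ delivers the stated operator-norm estimate.

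The only real subtlety is spotting the cancellation $N^{-\sigma-1/2}\cdot N^{\sigma-1/2}=N^{-1}$ between the decay of $\Phi$ on $\Omega$ and the reciprocal $E_N^{-1}$: this is what turns what looks like an awkward division by a tiny exponential into the favorable estimate $|g|\lesssim N^{-1}$. The rest is bookkeeping and exploiting that $\Theta$ has compact support while $B$ and $E_N$ are bounded by $1$.
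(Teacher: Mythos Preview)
Your proof is correct and follows exactly the approach the paper takes: bound $|B|$ from below on the support of $\nabla\Theta$ (which is separated from $S$), then combine Lemma~\ref{lem0} and Lemma~\ref{lem1}. The paper's own proof is just a two-sentence pointer to these ingredients; you have spelled out the details, including the key cancellation $N^{-\sigma-1/2}\cdot N^{\sigma-1/2}=N^{-1}$, in the expected way.
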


\begin{proof} By our assumption on $S$, $|B(s)|$ is bounded below on
the set where $\nabla \Theta\neq 0$. We therefore obtain the result
by combining the estimates of Lemma~\ref{lem0} and Lemma~\ref{lem1}.
\end{proof}

\begin{proof}[Final part of the proof of Theorem~1] Set
$f_0(s)=B(s)E_N(s)/(s+1/2)$, where $N$ is a sufficiently large
integer. Then $f_0$ is in $E_N H^2(\C_{1/2}^+)$ and its
$H^2(\C_{1/2}^+)$ norm is $\sqrt{2\pi}$. Set $f_j=T_N^j f_0$, and
let $F_j$ be the Dirichlet series in $\Ht$ obtained by applying
Lemma~\ref{lem0} to $f_j$. Then $F_0+f_1$ vanishes on $S$ since
$f_1=f_0-F_0$ on $S$. Iterating, we get that $F_0+\cdots + F_j+
f_{j+1}$ also vanishes on $S$. Thus the function
\begin{equation}\label{FF} F=\sum_{j=0}^\infty F_j \end{equation}
is a nontrivial function in $\Ht$ vanishing on $S$ if we choose $N$
so large that $\|T_N\|<1$ and, say,
\[ |F_0(3/2)|>\sum_{j=1}^\infty |F_j(3/2)|.\]
Both inequalities can be achieved in view of respectively
Lemma~\ref{crucial1} and Lemma~\ref{lem0}.
\end{proof}
It is of interest to note that our algorithm can be applied in the
following way. Let $f$ be an arbitrary function in $H^2(\C_{1/2}^+)$
that is not divisible by the Blaschke product $B$. We may set
$f_0=f$ and apply $T_N$ to $f$ as before, now viewing $T_N$ as an
operator from $H^2(\C_{1/2}^+)$ to $E_N H^2(\C_{1/2}^+)$. In the
first step of the iteration, we get a poorer bound (depending on
$N$), but Lemma~\ref{crucial1} applies when $T_N$ acts on $f_j$ for
$j\ge 1$. The proof shows that $F$ agrees with $f$ on $S$, which we
again take to mean that $f-F$ is divisible by $B$. What we have
proved, can be summarized as follows.

\begin{theorem1p}
Suppose $S=(\sigma_j+it_j)$ is a bounded Blaschke sequence in
$\C_{1/2}^+$. Then for every function $f$ in $H^2(\C^+_{1/2})$ there
is a Dirichlet series $F$ in $\Ht$ that agrees with $f$ on $S$.
\end{theorem1p}

This theorem can be viewed as a precise statement about the local
resemblance between $H^2(\C^+_{1/2})$ and $\Ht$ but of a different
nature than the approximation theorem in \cite{OS}. Namely, since
the function $f-F$ is divisible by a Blaschke product, it can not be
continued analytically across any segment containing an accumulation
point of the Blaschke sequence.

On the other hand, we may relate our discussion to the results of
\cite{Ose} and \cite{OS}. First, note that Theorem~1' implies the
local interpolation theorem for $\Ht$ presented in \cite{Ose}.
Second, if we replace $B$ by the constant $1$ in the proof of
Theorem~1', we get that the function $f-F$ does indeed extend to an
analytic function in $\C\setminus (1/2+i(-R+1,R-1))$; here $f$ is an arbitrary function in 
$H^2(\C^+_{1/2})$ and $F$ is the function in \eqref{FF} obtained by starting the iteration with $f_0=f$. This result may
be compared to Theorem~1 in \cite{OS}.

\section{Zeros of functions in $\Da$}

In \cite{O}, Olsen studied the relation between $\Da$ and the
ordinary weighted Dirchlet spaces in the half-plane $\C^+_{1/2}$. We
will use the link found by Olsen to obtain local results on the
zeros of functions in $\Da$.

For $0\le \beta \le 1$, we let $D_{\beta}(\C_{1/2}^+)$ consist of
those functions $f$ in $H^2(\C_{1/2}^+)$ for which
\[ \int_{\C_{1/2}^+}|f'(s)|^2(\sigma-1/2)^{1-\beta} dm(s)<\infty. \]
Since $f$ belongs to $H^2(\C_{1/2}^+)$, we may write
\[ f(s)=\int_{0}^{\infty} \varphi(\xi) e^{-(s-1/2)\xi} d\xi \]
for some function $\varphi$ in $L^2(0,\infty)$. A computation
involving the Plancherel identity shows that
\[ \int_{\C_{1/2}^+}|f'(s)|^2(\sigma-1/2)^{1-\beta}
dm(s)=c\int_0^\infty |\varphi(\xi)|^2 \xi^\beta d\xi\] for an
absolute constant $c$. We may therefore equip
$D_{\beta}(\C_{1/2}^+)$ with the norm
\[ \|f\|_{D_\beta(\C_{1/2}^+)}=\left(\int_{0}^\infty |\varphi(\xi)|^2 (1+\xi^\beta)
d\xi\right)^{1/2}.\]

It will be convenient to use the following terminology. Given a
class of analytic functions $\Cl$ on some domain $\Omega$ in $\C$,
we say that a sequence $\Lambda$ of not necessarily distinct points
in $\Omega$ belongs to $Z(\Cl)$ if there is a nontrivial function in
$\Cl$ having $\Lambda$ as its zero set. We will also a few times
resort to the notation $U\lesssim V$ which means that there is a
positive constant $C$ such that $U\le CV$ holds for whatever
arguments the quantities $U$ and $V$ depend on.

\begin{theorem}\label{two}
Suppose $0<\alpha\le \infty$ and let $S=(\sigma_j+it_j)$ be a
bounded sequence in $\C_{1/2}^+$. Then there is a nontrivial
function in $\Da$ vanishing on $S$ if and only if $S$ belongs to
$Z(D_{1-2^{-\alpha}}(\C_{1/2}^+))$.
\end{theorem}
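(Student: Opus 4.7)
The plan is to adapt the three-step construction from the proof of Theorem~1 to the weighted setting, replacing $H^2(\C_{1/2}^+)$ by $D_\beta(\C_{1/2}^+)$ with $\beta=1-2^{-\alpha}$ and $\Ht$ by $\Da$ throughout. The reason to expect exactly the exponent $\beta=1-2^{-\alpha}$ is the Tauberian identity $\sum_n [d(n)]^{-\alpha}n^{-2\sigma}\asymp (\sigma-1/2)^{-2^{-\alpha}}$ near $\sigma=1/2$: by Cauchy--Schwarz this forces $|f(\sigma+it)|\lesssim (\sigma-1/2)^{-2^{-\alpha}/2}\|f\|_{\Da}$ for $f\in\Da$, matching the boundary growth rate of functions in $D_\beta(\C_{1/2}^+)$.

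For the ``only if'' direction, the local equivalence between $\Da$ and $D_\beta(\C_{1/2}^+)$ established in \cite{O} will deliver the conclusion directly: given a nontrivial $f\in\Da$ vanishing on $S$, multiplication by a cutoff equal to $1$ on a neighbourhood of $S$ together with division by a suitable power of $s+1/2$ yields a nontrivial element of $D_\beta(\C_{1/2}^+)$ still vanishing on $S$. For the ``if'' direction, the plan is to rerun the iteration of Section~2 with two weighted replacements. The approximation step needs a weighted version of Lemma~\ref{lem0}: for $\varphi$ in the $(1+\xi^\beta)d\xi$-weighted $L^2$ space on $(\log N,\infty)$, a suitable discretisation produces $F(s)=\sum_{n\ge N}a_n n^{-s}\in\Da$ with $\|F\|_{\Da}\lesssim \|\varphi\|$ and the same pointwise remainder estimate on $f-F$ as before. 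The $\overline\partial$-step needs a weighted version of Lemma~\ref{lem1}: for the compactly supported source $\overline\partial(\Theta\Phi)/(BE_N)$, the solution $u$ satisfies a $D_\beta(\C_{1/2}^+)$-norm bound obtained by combining $\|u\|_\infty\lesssim \epsilon\log R$ with standard derivative estimates for $\overline\partial$-solutions with compactly supported right-hand side.

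With these two lemmas, the operator $T_N$ on $E_N D_\beta(\C_{1/2}^+)$ will satisfy an estimate $\|T_N\|\le C(R,S)N^{-c}$ for some positive $c$; starting the iteration from $f_0(s)=B(s)E_N(s)(s+1/2)^{-k}$ with $k$ chosen so that $f_0\in D_\beta(\C_{1/2}^+)$, and summing the associated Dirichlet series $F_j$ as in \eqref{FF}, yields the desired $F\in\Da$ vanishing on $S$ once $N$ is large enough to ensure both contractivity of $T_N$ and dominance of $F_0$ at $s=3/2$. The main obstacle is the weighted version of Lemma~\ref{lem0}: identifying the right discretisation and proving the $\Da$-bound in terms of the $(1+\xi^\beta)d\xi$-weighted $L^2$ norm of $\varphi$ is the sole genuinely new analytic estimate beyond those of Section~2, and it is precisely this step that forces the exponent $\beta=1-2^{-\alpha}$.
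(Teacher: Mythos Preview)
Your overall strategy matches the paper's, but there is a genuine gap: you cannot use the Blaschke product $B$ in the weighted setting. The seed $f_0=BE_N(s+1/2)^{-k}$ need not lie in $D_\beta(\C_{1/2}^+)$, because a sequence $S$ being a zero set for $D_\beta$ does \emph{not} imply that its Blaschke product has locally finite weighted Dirichlet integral. The same obstruction reappears when you try to bound $\|T_Nf\|_{D_\beta}$: the product rule applied to $BE_Nu$ produces a term $(\partial B)E_Nu$, and controlling $\int|\partial B|^2(\sigma-1/2)^{1-\beta}\,dm$ over a neighbourhood of $S$ is exactly the condition that may fail. The paper sidesteps this by invoking a structural fact about Dirichlet-type zero sets (Lemma~\ref{subsetanal}, extracted from Agler--McCarthy and Akutowicz--Carleson): there is a function $G\in D_\beta(\C_{1/2}^+)\cap H^\infty(\C_{1/2}^+)$ with zero set exactly $S$ that continues analytically across $\sigma=1/2$ away from $S$. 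This $G$ replaces $B$ both in the denominator of the $\overline\partial$-source and as the seed $f_0=E_NG$.

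You also underestimate the weighted approximation lemma. The pointwise remainder estimate is \emph{not} the same as in Lemma~\ref{lem0}: to secure $\|F\|_{\Da}\lesssim\|\varphi\|_{2,\beta}$ one must discretise along a partition $(\xi_n)$ adapted to the weights $[d(n)]^{\alpha}$, and the Ramanujan--Wilson asymptotic $\sum_{n\le M}[d(n)]^{-\alpha}\sim AM(\log M)^{2^{-\alpha}-1}$ (Lemma~\ref{RamWil}) then yields only a power of $\log N$, not $N^{-c}$, in the pointwise bound on $\Phi$. Moreover, a pointwise bound on $\Phi$ alone cannot control the $D_\beta$-norm of $T_Nf$; a separate weighted $L^2$ estimate on $\Phi'$ is required (inequality~\eqref{derivative} in Lemma~\ref{lemD}), proved by a duality and Plancherel--P\'olya argument. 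The net outcome is $\|T_N\|\le C(\log N)^{-\delta}$ rather than $CN^{-c}$; this still suffices for the iteration, but it shows that the ``sole genuinely new analytic estimate'' is substantially more delicate than you indicate.
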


The ``only if'' part of this theorem follows from Theorem~1 of
\cite{O} and will therefore not be considered in what follows.

A number of partial results regarding $Z(D_{\beta}(\C_{1/2}^+))$ for
$0<\beta \le 1$ are known thanks to work of L. Carleson, H. Shapiro
and A. Shields, and others \cite{C, C2, SS, AM, MS, PP}. These
papers deal with $D_{\beta}(\D)$, i.e., the space of analytic
functions $f$ on the unit disk $\D$ for which
\[ \int_{\D}|f'(z)|^2(1-|z|^2)^{1-\beta} dm(z)<\infty; \]
the results regarding $D_{\beta}(\D)$ apply because of the following
fact: If $\phi$ is a conformal map of $\C_{1/2}^+$ onto the unit
disk $\D$, then $S$ is in $Z(D_{\beta}(\C_{1/2}^+))$ if and only if
$\phi(S)$ is in $Z(D_{\beta}(\D))$. Indeed, if $f$ is in
$D_{\beta}(\D)$ and we choose
\[ \phi(s)=\frac{s-3/2}{s+1/2}, \]
then a calculation shows that the function
\[ F(s)=f(\phi(s)) (s+1/2)^{\beta-2} \]
is in $D_{\beta}(\C_{1/2}^+)$; a similar transformation can be found
for the reverse inclusion.

One of Carleson's theorems \cite{C} implies that
\[ \sum_j (\sigma_j-1/2)^{1-\beta} <\infty \]
is a sufficient condition for a bounded sequence $S=(\sigma_j+it_j)$
to be in $Z(D_{\beta}(\C_{1/2}^+))$ for $0<\beta<1$; a theorem of
Shapiro and Shields \cite{SS} shows that
\[ \sum_j |\log(\sigma_j-1/2)|^{-1}<\infty \]
suffices for $S$ to be in $Z(D_{1}(\C_{1/2}^+))$. If $S=(\sigma_j+i
t_j)$ is restricted to a cone $|t-t_0|\le c (\sigma-1/2)$, then the
Blaschke condition is sufficient for the sequence to belong to
$Z(D_{1}(\C_{1/2}^+))$, as follows from an observation in \cite{C2}.
For information about further developments, we refer to \cite{PP}
and \cite{MS}.

Of particular interest to us is the following result.

\begin{lemma} \label{subsetanal}
Assume $0<\beta\le 1$. If $S=(\sigma_j+it_j)$ belongs to
$Z(D_{\beta}(\C_{1/2}^+))$, then there exists a function $G$ in
$D_{\beta}(\C_{1/2}^+)\cap H^\infty(\C_{1/2}^+)$ that has $S$ as its
zero set and that can be continued analytically to the domain
\[ \C\setminus \bigcup_j ((-\infty,1-\sigma_j]+i t_j).\]
\end{lemma}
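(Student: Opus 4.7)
The plan is to transfer the question to the unit disk via the conformal map $\phi(s) = (s-3/2)/(s+1/2)$ used in the paragraph preceding the lemma, and there to construct the required function as an explicit canonical product whose singularities lie on the prescribed cuts.

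First, using the weighted correspondence $G(s) = g(\phi(s))(s+1/2)^{\beta-2}$ between $D_\beta(\D)$ and $D_\beta(\C_{1/2}^+)$, I would reduce the lemma to its disk analogue: if $\Lambda := \phi(S) \in Z(D_\beta(\D))$, then there exists $g \in D_\beta(\D) \cap H^\infty(\D)$ with zero set $\Lambda$ that extends analytically to $\C \setminus \bigcup_j \gamma_j$. Each $\gamma_j$ is the arc $\phi((-\infty, 1-\sigma_j] + it_j)$ of the circle $\phi(\{\operatorname{Im} s = t_j\})$; it joins the reflected point $\zeta_j := 1/\overline{\phi(\sigma_j+it_j)}$ outside $\overline{\D}$ to the point $\phi(\infty) = 1$ on the unit circle. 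Thus the cuts in the disk picture are arcs through a common terminal point, rather than radial slits.

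Second, I would construct $g$ as a canonical product pairing Blaschke factors $(\lambda_j - z)/(1 - \overline{\lambda_j} z)$ with correcting factors whose sole singularities lie on $\gamma_j$; such a correcting factor can be built from $(1 - z/\zeta_j)$ raised to a suitable (possibly non-integer) power, paired with a compensating factor vanishing at $\phi(\infty) = 1$ so that branch cuts can be drawn along $\gamma_j$. The exponents and precise form of the correcting factors are chosen so that each partial product is bounded on $\D$ with zero set exactly the truncation of $\Lambda$ and is analytic off the corresponding $\gamma_j$, and so that partial products converge in the $D_\beta$ norm. The convergence step is where the hypothesis $\Lambda \in Z(D_\beta(\D))$ enters, by comparison with a given $f \in D_\beta(\D)$ with $Z(f) = \Lambda$ (through its Nevanlinna factorization) to extract the quantitative summability needed to calibrate the exponents.

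Finally, pulling $g$ back through $\phi$ with the bounded, zero-free weight $(s+1/2)^{\beta-2}$ produces $G$ with all the desired properties on $\C_{1/2}^+$, and the extension domain pulls back to exactly $\C \setminus \bigcup_j ((-\infty, 1-\sigma_j] + it_j)$. The main obstacle will be the simultaneous control of the three properties of $g$: boundedness and the analytic-extension property pull toward a Blaschke-plus-reflected-partner product whose branches are cut along the arcs $\gamma_j$, while $D_\beta$ membership imposes a delicate constraint on the exponents. Calibrating these exponents from the abstract hypothesis that $\Lambda$ is a $D_\beta$ zero set, and arranging the branch cuts of the reflected-partner factors to coincide precisely with the arcs $\gamma_j$ rather than wandering into the disk or across other $\gamma_k$, is the core analytical difficulty.
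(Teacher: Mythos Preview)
The paper's proof is a single sentence of citation: Propositions~9.33 and 9.37 of Agler--McCarthy combined with Theorem~2 of Akutowicz--Carleson, plus the remark that multipliers of $D_\beta$ lie in $H^\infty$. The mechanism is that $D_\beta(\D)$ has a complete Nevanlinna--Pick kernel, so any zero set for the space is automatically the zero set of a \emph{multiplier}; multipliers are bounded, and Akutowicz--Carleson then supplies the analytic continuation to the slit domain.

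Your proposal attempts instead an explicit canonical-product construction. The conformal reduction to the disk is correct, and the idea of pairing each Blaschke factor with a reflected partner whose branch cut runs along $\gamma_j$ is exactly the Akutowicz--Carleson device for the $H^\infty$ and analytic-continuation parts. But the step you yourself label ``the core analytical difficulty''---extracting from the bare hypothesis $\Lambda\in Z(D_\beta(\D))$ enough quantitative summability to force your product into $D_\beta$---is not carried out, and there is no known direct way to do it. Carleson-type canonical products live in $D_\beta$ under explicit sufficient conditions such as $\sum_j(1-|\lambda_j|)^{1-\beta}<\infty$, not under the abstract zero-set hypothesis; passing from ``some $f\in D_\beta$ vanishes on $\Lambda$'' to a usable summability statement is exactly what the Pick-interpolation machinery in Agler--McCarthy accomplishes, and your outline offers no substitute. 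So the proposal has a genuine gap at its central step. (A smaller point: for non-integer $\beta$ the weight $(s+1/2)^{\beta-2}$ in your pullback carries its own branch cut emanating from $-1/2$, which need not lie inside $\bigcup_j((-\infty,1-\sigma_j]+it_j)$; this is repairable by routing the cut along one of the given slits, but it should be noted.)
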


\begin{proof} The lemma is a combination of Proposition 9.33 and Proposition 9.37 in
\cite[p. 137]{AM} and Theorem 2 in \cite{AC}. Here we use the fact
that the multiplier algebra of $D_{\beta}(\C_{1/2}^+)$ is a subset
of $H^\infty(\C_{1/2}^+)$.\end{proof}

Note that the only property that we will need regarding the analytic continuation of the function $G$ in Lemma~\ref{subsetanal}, is that $G$ is uniformly bounded with bounded derivative 
on any subset of the closed half-plane $\sigma\ge 1/2$ which is at a positive distance from $S$.

We set
\[ \|\varphi\|_{2,\beta}^2=\int_{R}^\infty |\varphi(\xi)|^2 \xi^{\beta}
d\xi,\] assuming $\varphi$ is a function defined for $\xi\ge R>0$,
and let $L^2_{\beta}(R,\infty)$ be the set of measurable functions
$\varphi$ on the half-line $[R,\infty)$ for which
$\|\varphi\|_{2,\beta}<\infty$. The following lemma is analogous to
Lemma~\ref{lem0}.
\begin{lemma} \label{lemD} Let $N$ be a positive integer and $K$ a compact
subset of the closed half-plane $\overline{\C_{1/2}^+}$, and assume
that $0<\alpha\le \infty$. Then there exist constants $c, C, C_K$
and $\eta>1/2$, $\nu>1$ such that, for every $\varphi$ in
$L^2_{1-2^{-\alpha}}(\log N,\infty)$, there is a function
$F(s)=\sum_{n=N}^\infty a_n n^{-s}$ in $\Da$ satisfying
$\|F\|_{\Da}\le c \|\varphi\|_{2,1-2^{-\alpha}}$ and such that the
function
\[\Phi(s)=\int_{\log N}^\infty \varphi(\xi) e^{-(s-1/2)\xi}
d\xi - F(s) \] enjoys the estimates \begin{equation} |\Phi(s)|  \le
C |s-1/2| N^{-\sigma+1/2} (\log N)^{-\eta}
\|\varphi\|_{2,1-2^{-\alpha}} \label{point} \end{equation} for $s$
in $\C_{1/2}^{+}$ and
\begin{equation}
 \int_{K} |\Phi'(s)|^2 (\sigma -1/2)^{2^{-\alpha}} dm(s)  \le
C_K (\log N)^{-\nu} \| \varphi\|_{2,1-2^{-\alpha}}^2.
\label{derivative}
\end{equation}
\end{lemma}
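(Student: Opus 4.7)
My plan is to mirror the proof of Lemma~\ref{lem0}: construct $F(s)=\sum_{n\ge N}a_n n^{-s}$ by discretizing $\int_{\log N}^\infty\varphi(\xi)e^{-(s-1/2)\xi}\,d\xi$, starting from the natural candidate $a_n=\sqrt n\int_{\log n}^{\log(n+1)}\varphi(\xi)\,d\xi$, and then establish the three bounds by Cauchy--Schwarz arguments adapted to the weight $\xi^{1-2^{-\alpha}}$.

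For the pointwise estimate \eqref{point}, I would Taylor-expand $e^{-(s-1/2)\xi}$ about $\log n$ on each cell $[\log n,\log(n+1)]$ to obtain a per-cell error of order $|s-1/2|/n$, sum to get $|\Phi(s)|\lesssim|s-1/2|\int_{\log N}^\infty|\varphi(\xi)|e^{-(\sigma+1/2)\xi}\,d\xi$, and apply Cauchy--Schwarz against $\|\varphi\|_{2,1-2^{-\alpha}}$. The resulting Gamma-type tail integral $\int_{\log N}^\infty \xi^{-(1-2^{-\alpha})}e^{-2(\sigma+1/2)\xi}\,d\xi$ is asymptotic to a multiple of $N^{-2(\sigma+1/2)}(\log N)^{-(1-2^{-\alpha})}$; taking square roots and absorbing the excess $N^{-1}$ factor as a power of $\log N$ yields \eqref{point} with $\eta>\tfrac12$ for $\alpha>0$. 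The weighted $L^2$ bound \eqref{derivative} is analogous: differentiate $\Phi$ to introduce an extra factor of $\xi$, apply the same Cauchy--Schwarz and square, and use Fubini so that the $s$-integral $\int_K(\sigma-1/2)^{2^{-\alpha}}e^{-2(\sigma-1/2)\xi}\,dm(s)$ is computed first; by compactness of $K$ and a Gamma-function calculation this integral behaves like $C_K\xi^{-1-2^{-\alpha}}$, and multiplying by the $\xi^{1-2^{-\alpha}}$ weight produces a $\xi$-tail decaying faster than $\xi^{-2}$, delivering the required $\nu>1$.

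The main obstacle is the $\Da$ norm bound $\|F\|_{\Da}\le c\|\varphi\|_{2,1-2^{-\alpha}}$. With the naive $a_n$, Cauchy--Schwarz gives $|a_n|^2\le\int_{\log n}^{\log(n+1)}|\varphi|^2\,d\xi$, but $\sum_n[d(n)]^\alpha|a_n|^2$ then picks up pointwise values of $[d(n)]^\alpha$ that fluctuate too wildly to be dominated by the target weight $\xi^{1-2^{-\alpha}}$---the Selberg--Delange asymptotic $\sum_{n\le x}[d(n)]^\alpha\sim c_\alpha x(\log x)^{2^\alpha-1}$ delivers the exponent $2^\alpha-1$ on $\log n$, not $1-2^{-\alpha}$. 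Matching the correct exponent will require either a smoothed or sparser discretization (in the spirit of the extreme case $\Din$, where only primes contribute and the cell width is $\sim\log n/n$ rather than $\sim1/n$), or an invocation of Olsen's local embedding $\Da\hookrightarrow D_{1-2^{-\alpha}}(\C_{1/2}^+)$ from \cite{O} that encodes precisely the exponent $1-2^{-\alpha}$; this is where the heart of the proof lies.
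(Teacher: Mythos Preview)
You have correctly located the crux of the lemma---the norm bound $\|F\|_{\Da}\le c\|\varphi\|_{2,1-2^{-\alpha}}$---and you are right that the naive cells $[\log n,\log(n+1)]$ cannot deliver it. But neither of your proposed fixes works. Olsen's embedding goes the wrong way (it takes $\Da$ into $D_{1-2^{-\alpha}}$, which is the ``only if'' direction of Theorem~\ref{two}), and ``sparser'' in the sense of dropping integers is not the right picture for finite $\alpha$. The paper's idea is instead to keep all $n$ but make the cell widths \emph{variable}: set $\xi_{n+1}-\xi_n$ proportional to $[d(n)]^{-\alpha}n^{-1}(\log n)^{1-2^{-\alpha}}$. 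Then Cauchy--Schwarz gives $|a_n|^2\le n(\xi_{n+1}-\xi_n)\int_{\xi_n}^{\xi_{n+1}}|\varphi|^2$, and the factor $[d(n)]^{\alpha}$ in the $\Da$ norm cancels against the cell width, leaving exactly the weight $\xi^{1-2^{-\alpha}}$. The normalization is controlled by the Ramanujan--Wilson asymptotic $\sum_{n\le M}[d(n)]^{-\alpha}\sim A\,M(\log M)^{2^{-\alpha}-1}$; note this is the sum of the \emph{negative} power $[d(n)]^{-\alpha}$, not the positive-power Selberg--Delange sum you invoked, and it is precisely this exponent $2^{-\alpha}-1$ that produces $1-2^{-\alpha}$ after the bookkeeping.

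Once you change the discretization, your sketches for \eqref{point} and \eqref{derivative} no longer apply as written, because the cells are now of wildly varying length (some $n$ have $d(n)$ as large as $\exp(c\log n/\log\log n)$) and the per-cell Taylor error is governed by the block width, not by $1/n$. The paper handles this by grouping the cells into blocks $j^{\gamma}\le\log n<(j+1)^{\gamma}$ for a parameter $\gamma\in(1/2,1)$, applying Cauchy--Schwarz blockwise, and tuning $\gamma$ close enough to $1/2$ to force $\eta>1/2$ and $\nu>1$. Your ``absorb the excess $N^{-1}$ as a power of $\log N$'' trick, which is valid for the naive cells, is no longer available after the modification; the coarser blocks genuinely cost a power of $\log N$, and the exponents $\eta,\nu$ come out only barely above the required thresholds.
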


We get a poorer bound in \eqref{point} than in Lemma~\ref{lem0}
because asymptotic estimates for the divisor function are involved
in the construction of the Dirichlet series $F$.

\begin{lemma}\label{RamWil}
If $0<\alpha\le \infty$, then we have
\[\sum_{ n\le M} [d(n)]^{-\alpha}
= A M (\log M)^{2^{-\alpha}-1}(1+O((\log M )^{-1}))
\] for some absolute constant $A>0$ when $M\to\infty$.
\end{lemma}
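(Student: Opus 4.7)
The function $f(n)=[d(n)]^{-\alpha}$ is multiplicative with $f(p^k)=(k+1)^{-\alpha}$ for each prime power; in particular $f(p)=2^{-\alpha}$ for every prime $p$. When $0<\alpha<\infty$, I would prove the estimate by the Selberg--Delange method applied to the Dirichlet series
\[ F(s)=\sum_{n=1}^\infty \frac{f(n)}{n^s}=\prod_p E_p(s), \qquad E_p(s)=\sum_{k=0}^\infty (k+1)^{-\alpha} p^{-ks}, \]
which converges absolutely for $\operatorname{Re} s>1$. The first step is to compare $F$ with $\zeta(s)^{2^{-\alpha}}=\prod_p (1-p^{-s})^{-2^{-\alpha}}$ by writing $F(s)=\zeta(s)^{2^{-\alpha}} H(s)$, where
\[ H(s)=\prod_p (1-p^{-s})^{2^{-\alpha}} E_p(s). \]

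Next, I would verify that each Euler factor of $H$ equals $1+O(p^{-2s})$. This follows from the expansions $(1-p^{-s})^{2^{-\alpha}}=1-2^{-\alpha} p^{-s}+O(p^{-2s})$ and $E_p(s)=1+2^{-\alpha} p^{-s}+O(p^{-2s})$: the linear terms in $p^{-s}$ cancel, and what remains is a factor $1+O(p^{-2s})$. Consequently the product defining $H$ converges absolutely and locally uniformly in the half-plane $\operatorname{Re} s>1/2$, so that $H$ is analytic there with $H(1)>0$. Together with the classical zero-free region for $\zeta(s)$, this puts us exactly in the setting of the Selberg--Delange theorem (e.g., Theorem II.5.2 in Tenenbaum's \emph{Introduction to analytic and probabilistic number theory}) with exponent $\kappa=2^{-\alpha}$, and a single-term expansion of that result delivers
\[ \sum_{n\le M}[d(n)]^{-\alpha}=\frac{H(1)}{\Gamma(2^{-\alpha})}\, M(\log M)^{2^{-\alpha}-1}\Bigl(1+O\bigl((\log M)^{-1}\bigr)\Bigr), \]
i.e., the lemma with $A=H(1)/\Gamma(2^{-\alpha})>0$.

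The endpoint case $\alpha=\infty$ must be handled separately: the convention compatible with the definition of $\Din$ in the introduction is $[d(n)]^{-\infty}=1$ when $n=1$ or $n$ is prime and $0$ otherwise, so the sum reduces to $1+\pi(M)$, and the prime number theorem with the classical error term gives $\pi(M)=M(\log M)^{-1}(1+O((\log M)^{-1}))$, matching the lemma with $2^{-\infty}=0$ and $A=1$. The only real obstacle in the case $0<\alpha<\infty$ is obtaining the sharp $O((\log M)^{-1})$ remainder rather than a weaker error; this is precisely what Selberg--Delange is designed for once the analytic continuation of $H$ past $\operatorname{Re} s=1$ has been secured, so after the Euler-factor computation the lemma reduces to a citation.
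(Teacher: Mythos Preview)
Your argument is correct. The Selberg--Delange method is precisely the right tool here: $f(n)=[d(n)]^{-\alpha}$ is multiplicative with $f(p)=2^{-\alpha}$, the comparison $F(s)=\zeta(s)^{2^{-\alpha}}H(s)$ with $H$ holomorphic and nonvanishing in $\operatorname{Re}s>1/2$ follows exactly from the Euler-factor cancellation you wrote down, and the one-term Selberg--Delange expansion then yields the asymptotic with the sharp $O((\log M)^{-1})$ relative error and the explicit constant $A=H(1)/\Gamma(2^{-\alpha})$. Your treatment of $\alpha=\infty$ via the prime number theorem is also exactly what is needed and matches the paper's convention for $\Din$.

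The paper itself does not supply a proof: it simply cites the classical literature, noting that the formula was announced by Ramanujan and proved by Wilson (1922), and that the endpoint $\alpha=\infty$ follows from the prime number theorem. Wilson's original argument predates the Selberg--Delange framework and proceeds by direct contour integration; your approach is the modern repackaging of the same analytic input. What you gain is a self-contained route that makes the constant $A$ explicit and places the result within a general theorem, at the cost of invoking a somewhat heavier piece of machinery than the bare citation the paper relies on.
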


The formula in the lemma (in a more precise form) was stated by
Ramanujan \cite[Formula (9)]{R} and later proved by Wilson in
\cite[Formula (2.39)]{W}. For $\alpha=\infty$, the estimate follows
from the prime number theorem.

\begin{proof}[Proof of Lemma~\ref{lemD}]
Let $n_j$ be the smallest integer $n$ such that $e^{j^{\gamma}}\le
n$. We set $\xi_{n_j}=j^{\gamma}$; when $e^{j^\gamma}<n\le
e^{(j+1)^\gamma}$, we choose $\xi_n$ inductively such that
\begin{equation}\label{length} \xi_{n+1}-\xi_n=A_j
[d(n)]^{-\alpha}e^{-j^\gamma}j^{\gamma(1-2^{-\alpha})},\end{equation}
where $A_j$ is chosen such that $\xi_{n_{j+1}}=(j+1)^\gamma$. Note
if $1/2<\gamma<1$, then Lemma~\ref{RamWil} implies that
\begin{equation}\sum_{j^{\gamma}\le \log n\le (j+1)^\gamma} [d(n)]^{-\alpha} =
A e^{j^\gamma}j^{\gamma-1}j^{\gamma(2^{-\alpha}-1)}(1+o(1))
\label{RW}
\end{equation} for some absolute constant $A>0$ when $j\to\infty$.
We will therefore assume that $1/2<\gamma <1$.

We set
\[ a_n=\sqrt{n}\int_{\xi_n}^{\xi_{n+1}} \varphi(\xi) d\xi\]
and note first that
\[ |a_n|^2\le n (\xi_{n+1}-\xi_n) \int_{\xi_n}^{\xi_{n+1}}|\varphi(\xi)|^2d\xi
\] by the Cauchy--Schwarz inequality. Taking into account
\eqref{length}, we obtain the norm estimate $\|F\|_{\Da}\le c
\|\varphi\|_{2,1-2^{-\alpha}}$ from this inequality.

To obtain the pointwise estimate \eqref{point}, we consider first
the finite sum
\[ \Sigma_j=\sum_{n=n_j}^{n_{j+1}-1} \left (\int_{\xi_n}^{\xi_{n+1}}
\varphi (\xi)e^{-(s-1/2)\xi} d\xi - a_n n^{-s} \right).\] We observe
that we have
\[ |n^{-s+1/2}-e^{-(s-1/2)\xi}|\le N^{-\sigma+1/2} |s-1/2|
j^{\gamma-1} \] as long as $\xi$ and $\xi_n$ both are in the
interval $[\xi_{n_j},\xi_{n_{j+1}}]$. Applying again the
Cauchy--Schwarz inequality and taking into account \eqref{length},
we therefore obtain
\[ \Big|\int_{\xi_n}^{\xi_{n+1}}
\varphi(\xi)e^{-(s-1/2)\xi} d\xi - a_n n^{-s}\Big|^2\ \ \ \ \ \ \ \
\ \ \ \ \ \ \ \ \ \ \ \ \ \ \ \ \ \ \ \ \ \ \ \ \ \ \ \ \ \ \ \ \ \
\ \ \ \ \ \ \ \ \ \ \ \ \]
\[
\ \ \ \ \ \ \ \ \ \ \ \ \ \ \ \le A_j N^{-2\sigma+1}|s-1/2|^2
j^{2(\gamma-1)} [d(n)]^{-\alpha} n^{-1} (\log n)^{1-2^{-\alpha}}
\int_{\xi_n}^{\xi_{n+1}}|\varphi(\xi)|^2d\xi.\] Now applying the
Cauchy--Schwarz inequality to the sum, we get
\[ |\Sigma_j|^2 \le A_j
N^{-2\sigma+1}|s-1/2|^2 j^{2(\gamma-1)}\sum_{n=n_j}^{n_{j+1}}
[d(n)]^{-\alpha} n^{-1} (\log n)^{1-2^{-\alpha}}
\int_{\xi_{n_j}}^{\xi_{n_{j+1}}}|\varphi(\xi)|^2d\xi. \] By
\eqref{RW}, we therefore get
\[
|\Sigma_j|^2\lesssim N^{-2\sigma+1}|s-1/2|^2
j^{3(\gamma-1)-\gamma(1-2^{-\alpha})}
\int_{\xi_{n_j}}^{\xi_{n_{j+1}}} \xi^{1-2^{-\alpha}}
|\varphi(\xi)|^2 d\xi.\] If we now choose a sufficiently small
$\gamma$ in the interval $(1/2, 1)$, then the series
\[ \sum_{j=1}^\infty j^{-3+\gamma(2+2^{-\alpha})} \]
is summable and we get \eqref{point} with
\[ \eta=(2-\gamma(2+2^{-\alpha}))/\gamma.\]

To deal with \eqref{derivative}, we begin by assuming that $K\subset
\Omega(R,\tau)$ for suitable $R$ and $\tau$. By duality, we have
\[ \left(\int_{-R}^R |\Phi'(\sigma+it)|^2 dt\right)^{1/2}=
\sup_{\|g\|_2=1} \left|\int_{-R}^R \Phi'(\sigma+it) g(t) dt\right|,
\] where the supremum is taken over all $g$ of norm 1 in
$L^2(-R,R)$. We obtain
\begin{eqnarray*}
 \int_{-R}^R \Phi'(\sigma+it) g(t) dt &= & \sum_{n=N}^\infty
\left(a_n (\log n) n^{-\sigma} \hat{g}(\log
n)-\int_{\xi_n}^{\xi_{n+1}} \varphi(\xi) \xi
e^{-(\sigma-1/2)\xi}\hat{g}(\xi) d\xi\right) \\
&=& \sum_{n=N}^\infty \int_{\xi_n}^{\xi_{n+1}}
  \varphi(\xi) ((\log n)n^{-\sigma+1/2}\hat{g}
(\log n)-\xi e^{-(\sigma-1/2)\xi}\hat{g}(\xi) )d\xi.
\end{eqnarray*}
It follows that
\[ \left|\int_{-R}^R \Phi'(\sigma+it) g(t) dt\right|
\lesssim (\log N)^{1-1/\gamma}
\sum_{j}\sum_{n=n_j}^{n_{j+1}-1}|a_n|(\log n)
n^{-\sigma}(|\hat{g}(\xi_j^*)|+|\hat{g}'(\xi_j^{**})|), \] where
$|\hat{g}(\xi_j^*)|$ and $|\hat{g}'(\xi_j^{**})|$ are the maxima of
the respective functions $|\hat{g}(\xi)|$ and $|\hat{g}'(\xi)|$ on
$[\xi_{n_j},\xi_{n_{j+1}}]$. We apply the Cauchy--Schwarz inequality
and employ again Lemma~\ref{RamWil}; we also use the
Plancherel--P\'{o}lya inequality (cf. the proof on pp. 2674--2675 in
\cite{O}) and get
\[ \int_{-R}^R |\Phi'(\sigma+it)|^2 dt
\lesssim (\log N)^{2-2/\gamma} \sum_{n\ge N} |a_n|^2 [d(n)]^{\alpha}
(\log n)^{2^{-\alpha}+1} n^{-2\sigma+1},\] where the implicit
constant depends on $R$. Since
\[ \int_{1/2}^\infty (\log n)^{2^{-\alpha}+1}
n^{-2\sigma+1}(\sigma-1/2)^{2^{-\alpha}}d\sigma=2^{-1-2^{-\alpha}}\Gamma(1+2^{-\alpha}),\]
\eqref{derivative} follows with $\nu=2/\gamma-2$, where as in the
preceding case we get the desired result by choosing a sufficiently
small $\gamma$ in the interval $(1/2,1)$.
\end{proof}

Let now $S=(\sigma_j+it_j)$ be a bounded sequence in
$Z(D_{1-2^{-\alpha}}(\C_{1/2}^+))$, and let $G$ be the function from
Lemma~\ref{subsetanal} vanishing on $S$. We may again assume that
$S$ is in $\Omega(R-2, 1/2)$ for some $R>2$ and let $\Theta$ be as
above. We view $E_N D_{1-2^{-\alpha}}(\C_{1/2}^+)$ as a Hilbert
subspace of $D_{1-2^{-\alpha}}(\C_{1/2}^+)$ and define $T_N$ on $E_N
D_{1-2^{-\alpha}}(\C_{1/2}^+)$ similarly as in the preceding
section: Set
\[ f(s)=\int_{\log N}^\infty \varphi(\xi) e^{-(s-1/2)\xi} d\xi \]
and $\Phi=f-F$, where $F$ is as in Lemma~\ref{lemD}, and let $u$
denote the solution from Lemma~\ref{lem1} to the equation
\[ \overline{\partial} u = \frac{\overline{\partial}(\Theta \Phi)}{G E_N}.\]
Then set
\[ T_Nf=\Theta \Phi-GE_N u.\]
We will again use that $T_N f(s)= \Phi(s)$ for $s$ in $S$. 
\begin{lemma}\label{crucial2}
The operator $T_N$ acts boundedly on $E_N D_{1-2^{-\alpha}}(\C_{1/2}^+)$
with $\| T_N\|\le C (\log N)^{-\delta},$ where $\delta>0$ and $C$ is
a constant depending only on $R$ and $S$.
\end{lemma}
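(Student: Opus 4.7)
I would follow the template of Lemma~\ref{crucial1}, estimating separately the two summands of the $D_{1-2^{-\alpha}}(\C_{1/2}^+)$-norm of $T_N f$, namely $\|T_N f\|_{H^2}^2$ and $\int |(T_N f)'|^2(\sigma-1/2)^{2^{-\alpha}} dm(s)$. The preparation is to bound $g=\overline{\partial}(\Theta\Phi)/(GE_N)=(\overline{\partial}\Theta)\Phi/(GE_N)$: on the annular support of $\overline{\partial}\Theta$, $S$ lies at distance at least $1$, so Lemma~\ref{subsetanal} gives $|G|\ge c_S>0$ there, while $|E_N(s)|=N^{-\sigma+1/2}\ge N^{-2}$, and \eqref{point} shows that $|\Phi|/|E_N|\lesssim(\log N)^{-\eta}\|\varphi\|_{2,1-2^{-\alpha}}$. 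Hence $\|g\|_\infty\le\epsilon$ with $\epsilon\lesssim(\log N)^{-\eta}\|\varphi\|_{2,1-2^{-\alpha}}$. The $H^2$-part is then handled exactly as in Section~2: \eqref{point} bounds $\|\Theta\Phi\|_{H^2}$ by $(\log N)^{-\eta}\|\varphi\|_{2,1-2^{-\alpha}}$ on the bounded support of $\Theta$, while Lemma~\ref{lem1} gives $\|GE_N u\|_{H^2}\le\|G\|_\infty\|u\|_{H^2}\lesssim\epsilon$.

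For the weighted-derivative part I use that $T_N f$ is holomorphic, since $\overline{\partial}T_N f=0$ by the construction of $u$, whence the complex derivative equals the Wirtinger derivative and
\[(T_N f)'=(\partial_s\Theta)\Phi+\Theta\Phi'-(GE_N)'u-GE_N\,\partial_s u.\]
The first term is handled by \eqref{point} on the bounded support of $\partial_s\Theta$, contributing $\lesssim(\log N)^{-2\eta}\|\varphi\|_{2,1-2^{-\alpha}}^2$. The second is exactly \eqref{derivative} applied with $K=\Omega(R,2)$ and contributes $\lesssim(\log N)^{-\nu}\|\varphi\|_{2,1-2^{-\alpha}}^2$. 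For the fourth, the $L^2$-boundedness of the Beurling--Ahlfors transform yields $\|\partial_s u\|_{L^2(\C)}\lesssim\|g\|_{L^2(\C)}\lesssim\epsilon\sqrt{R}$; combined with $|GE_N|\le\|G\|_\infty$ and the decay of $|E_N|^2=N^{-2\sigma+1}$, which makes the product $|GE_N|^2(\sigma-1/2)^{2^{-\alpha}}$ uniformly bounded, this contribution is $\lesssim\epsilon^2$.

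The main obstacle is the third term, where $(GE_N)'=(G'-G\log N)E_N$ carries a factor of $\log N$ that must be defeated. The $G'E_N$ piece is harmless because $G\in D_{1-2^{-\alpha}}$ makes $|G'|^2$ integrable against $(\sigma-1/2)^{2^{-\alpha}}dm$, so together with $\|u\|_\infty\lesssim\epsilon\log R$ this piece is $\lesssim\epsilon^2$. For the $G(\log N)E_N$ piece I combine Lemma~\ref{lem1}'s horizontal bound $\int|u(\sigma+it)|^2 dt\lesssim\epsilon^2 R(\log R)^2$ with the gain
\[\int_{1/2}^\infty N^{-2\sigma+1}(\sigma-1/2)^{2^{-\alpha}}d\sigma\lesssim(\log N)^{-1-2^{-\alpha}}\]
produced by the change of variable $\tau=2(\sigma-1/2)\log N$, obtaining a net bound $\lesssim(\log N)^{2}\epsilon^2(\log N)^{-1-2^{-\alpha}}\lesssim(\log N)^{1-2^{-\alpha}-2\eta}\|\varphi\|_{2,1-2^{-\alpha}}^2$. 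The condition $\eta>1/2$ from Lemma~\ref{lemD} ensures $2\eta>1>1-2^{-\alpha}$ for every $\alpha>0$, so this exponent is strictly negative. Summing all contributions and using $\|\varphi\|_{2,1-2^{-\alpha}}\le\|f\|_{D_{1-2^{-\alpha}}}$ from the Laplace representation of $f\in E_N D_{1-2^{-\alpha}}$, I obtain $\|T_N f\|_{D_{1-2^{-\alpha}}}\lesssim(\log N)^{-\delta}\|f\|_{D_{1-2^{-\alpha}}}$ with $\delta=\min(\eta,\nu/2,\eta-(1-2^{-\alpha})/2)>0$.
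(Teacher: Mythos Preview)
Your argument is correct and tracks the paper's overall scheme, but it diverges in the handling of the term $GE_N\,\partial u$. The paper splits the half-plane into the region at distance $\ge 1/8$ from $\Delta=\operatorname{supp}\nabla\Theta$, where $|\partial u(s)|\lesssim(\log N)^{-\eta}(1+|s|)^{-2}$ trivially, and the near region, where it applies the Cauchy--Pompeiu formula to $\Theta\Phi/(GE_N)$ on an enlarged annulus $\Delta_+$ and, after writing $\Theta(w)=\Theta(s)+(\Theta(w)-\Theta(s))$ and exploiting the analyticity of $\Phi/(GE_N)$, extracts the pointwise bound $|\partial u(s)|\lesssim|\partial(\Theta\Phi/(GE_N))(s)|+(\log N)^{-\eta}|\log(\sigma-1/2)|$. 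Your route is shorter: the $L^2(\C)$-isometry of the Beurling--Ahlfors transform gives $\|\partial u\|_{L^2(\C)}=\|g\|_{L^2(\C)}\lesssim\epsilon$ at once, and since $|E_N(s)|^2(\sigma-1/2)^{2^{-\alpha}}=N^{-2\sigma+1}(\sigma-1/2)^{2^{-\alpha}}\lesssim(\log N)^{-2^{-\alpha}}$ uniformly in $s$, the weighted integral is bounded by a constant times $\epsilon^2$. The paper's approach is self-contained and avoids invoking a Calder\'on--Zygmund operator, at the cost of a more delicate local computation; yours is cleaner once the Beurling transform is taken as known. One small remark: your separate $H^2$ estimate, while correct, is not really needed---for functions in $E_N H^2(\C_{1/2}^+)$ the unweighted part of the $D_\beta$ norm is automatically dominated by the weighted part (the Laplace data live on $[\log N,\infty)$, so $\int|\psi|^2\,d\xi\le(\log N)^{-\beta}\int|\psi|^2\xi^\beta\,d\xi$), which is why the paper simply records that $T_Nf/E_N\in H^2(\C_{1/2}^+)$ and passes directly to the weighted Dirichlet integral.
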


\begin{proof} We set for convenience $\beta=1-2^{-\alpha}$. It is clear that $T_N f/E_N$ is in $H^2(\C_{1/2}^+)$ so it is enough to show
that
\[ \int_{\C_{1/2}^+}\left|\partial (T_N f)(s)\right|^2(\sigma-1/2)^{1-\beta} dm(z) \le
C (\log N)^{-\delta} \|f\|^2_{D_\beta(\C_{1/2}^+)}.\]  We observe
that \eqref{derivative} implies that
\[ \int_{\C_{1/2}^+}|\partial (\Theta \Phi)(s)|^2(\sigma-1/2)^{1-\beta} dm(z) \lesssim
(\log N)^{-\nu} \|f\|^2_{D_\beta(\C_{1/2}^+)},\] and so it remains
to consider the weighted area integral for the function
\begin{equation}\label{three}
\partial (GE_N u)=\partial G E_N u + G \partial E_N u + G E_N
\partial u.\end{equation}
We treat each of the terms on the right-hand side of \eqref{three}
separately. We get \[ \int_{\C_{1/2}^+}|\partial G (s)
E_N(s)u(s)|^2(\sigma-1/2)^{1-\beta} dm(z) \lesssim (\log N)^{-2\eta}
\|f\|^2_{D_\beta(\C_{1/2}^+)}\] by our assumption on $G$ and the
estimate on $u$ obtained from \eqref{point} and Lemma~\ref{lem1}. As
for the second term on the right-hand side of \eqref{three}, we
observe that
\[ |\partial E_N(s) u(s)|\lesssim (\log N)^{1-\eta} N^{-\sigma+1/2} (1+|s|)^{-1} \|f\|_{D_\beta(\C_{1/2}^+)}\]
by \eqref{point} and Lemma~\ref{lem1}. Hence, using also
Lemma~\ref{subsetanal}, we obtain
\[ \int_{\C_{1/2}^+}| G (s)
\partial E_N(s)u(s)|^2(\sigma-1/2)^{1-\beta} dm(z) \lesssim (\log
N)^{\beta-2\eta} \|f\|^2_{D_\beta(\C_{1/2}^+)},\] where
$2\eta-\beta>0$ because $\eta>1/2$.

It remains to estimate the contribution from the third term on the
right-hand side of \eqref{three}. Set $\Delta =\Omega(R,2)\setminus
\Omega(R-1,1)\supseteq \operatorname{supp}(\nabla \Theta)$. Then
trivially
\[ |\partial u(s)| \lesssim (\log N)^{-\eta} (1+|s|)^{-2}\] if
$\operatorname{dist}(s,\Delta)\ge 1/8$. On the other hand, if
$\operatorname{dist}(s,\Delta)< 1/8$, we argue as follows. Set
\[ \Delta_+=\Omega(R+1,3)\setminus \Omega(R-3/2,3/4)\] and apply the
Cauchy--Pompeiu formula to $\Theta \Phi/(GE_N)$ in $\Delta_+$. Hence
we get
\[ \partial u(s) = \partial \left(\frac{\Theta \Phi}{G
E_N}\right)(s) - \frac{1}{2\pi i}\int_{\partial \Delta_+}
\frac{\Theta(w) \Phi(w)}{G (w) E_N (w)}\frac{1}{(s-w)^2} dw.\] 
We write $\Theta(w)=\Theta (s )+\Theta(w)-\Theta
(s)$. Then, by  the smoothness of $\Theta$ and analyticity of $\Phi/(GE_N)$,  we
get 
\[ \left|\int_{\partial \Delta_+}
\frac{\Theta(w) \Phi(w)}{G (w) E_N (w)}\frac{1}{(s-w)^2} dw\right|\lesssim 
|\Theta(s)|  \left|\partial \left(\frac{ \Phi}{G
E_N}\right)(s)\right| + \int_{\partial \Delta_+}
\frac{ |\Phi(w)|}{|G (w) E_N (w)|}\frac{1}{|s-w|} |dw|.\]
Using Lemma~\ref{subsetanal} and \eqref{point}, we therefore deduce that
\[ |\partial u(s)|\lesssim \left|\partial \left(\frac{\Theta \Phi}{G
E_N}\right)(s)\right|+(\log N)^{-\eta} |\log (\sigma-1/2)| \|f\|_{D_\beta(\C_{1/2}^+)}\] 
whenever $\operatorname{dist}(s,\Delta)< 1/8$.
The
remaining part of the estimation is essentially a repetition of the
preceding calculations, and the details are therefore omitted.
\end{proof}

\begin{proof}[Final part of the proof of Theorem~\ref{two}] We act in the same way as in
the proof of Theorem~1. Set $f_0=E_N G$, where $N$ is a sufficiently
large integer, and then $f_j=T_N^j f_0$. Let $F_j$ be the Dirichlet
series in $\Da$ obtained from $f_j$. Then, by the same argument as
in the proof of Theorem~1, $F=\sum_{j=0}^\infty F_j$ is a nontrivial
function in $\Da$ vanishing on $S$ if $N$ is chosen so large that
$\|T_N\|<1$ and, say,
\[ |F_0(3/2)|>\sum_{j=1}^\infty |F_j(3/2)|.\]
This time we use respectively Lemma~\ref{crucial2} and
Lemma~\ref{lemD} to conclude that these inequalities are fulfilled
when $N$ is sufficiently large.
\end{proof}

In accordance with the observation leading to Theorem~1' in the
preceding section, we get that our algorithm in fact yields the
following stronger result.

\begin{theorem2p}
Suppose $S=(\sigma_j+it_j)$ is a bounded sequence in
$Z(D_{1-2^{-\alpha}}(\C_{1/2}^+))$. Then for every function $f$ in
$D_{1-2^{-\alpha}}(\C^+_{1/2})$ there is a Dirichlet series in $\Da$
that agrees with $f$ on $S$.
\end{theorem2p}

\section{The relation between $\Da$ and $\Hp$}

For $1\le p < \infty$, we define $\Hp$ as the closure of the set of
finite Dirichlet polynomials in the norm
\[ \lim_{T\to\infty} \left(\frac{1}{T} \left|\sum_{n} a_n n^{-it}\right|^p dt \right)^{1/p}. \]
Alternatively, we may express this limit as an $L^p$ norm over the
infinite-dimensional torus $\T^\infty$. We refer to F. Bayart's
paper \cite{Ba}, where these spaces were first studied.

We will only consider the case $2\le p <\infty$. Since then $\Hp$ is
a subset of $\Ht$, the Blaschke condition for bounded sequences in
$\C_{1/2}^+$ is a necessary condition for the existence of a
nontrivial Dirichlet series $f$ in $\Hp$ vanishing on a bounded
sequence $S$ in $\C_{1/2}^+$. We are not able to determine whether,
in general, this condition is sufficient as well, but the following
special case can be settled at once. Note first that Khinchin's
inequality implies that $\Din$ is included in $\Hp$ for every finite
$p$. (This fact will also follow from the sharper results to be
established below.) By Theorem~\ref{two} and Carleson's observation
from \cite{C2} mentioned above, this inclusion leads to the result
that for $1\le p <\infty$ the Blaschke condition is necessary and
sufficient for there to be a nontrivial function in $\Hp$ vanishing
on a bounded sequence contained in a cone $|t-t_0|\le
c|\sigma-1/2|$. It is of interest to note that this result fails
spectacularly when $p=\infty$ because $\Hi$ is a space of functions
analytic in the larger half-plane $\C^+$.\footnote{The Blaschke
condition $\sum_j \sigma_j <\infty$ is trivially a necessary and
sufficient condition for the existence of a nontrivial function in
$\Hi$ vanishing on a bounded sequence $(\sigma_j+it_j)$ in $\C^+$;
it suffices to observe that $(2^{-\sigma_j-it_j})$ is a Blaschke
sequence in $\D$, and so we may pick an associated Blaschke product
$B$ and use the function $B(2^{-s})$.}

We will now establish a more precise relation between $\Da$ and
$\Hp$.
\begin{lemma}\label{integer}
For $\alpha$ a nonnegative integer, the space $\Da$ is contractively
embedded in $\Ha$.
\end{lemma}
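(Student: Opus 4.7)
The plan is to argue by induction on the nonnegative integer $\alpha$, exploiting the elementary identity
\[ \|f\|_{\mathscr{H}^{2p}}^2 = \|f^2\|_{\Hp}, \qquad 1 \le p < \infty, \]
which is immediate from the realization of $\Hp$ as $L^p(\T^\infty)$. The base case $\alpha = 0$ reduces to the identity $\mathscr{D}_0 = \Ht$ with coinciding norms. For the inductive step, assuming the contractive inclusion $\mathscr{D}_{\alpha-1} \subset \mathscr{H}^{2^\alpha}$ is already known, the displayed identity applied to $f^2$ gives
\[ \|f\|_{\Ha}^2 = \|f^2\|_{\mathscr{H}^{2^\alpha}} \le \|f^2\|_{\mathscr{D}_{\alpha-1}}, \]
so that the whole matter is reduced to the single-step inequality $\|f^2\|_{\mathscr{D}_{\alpha-1}} \le \|f\|_{\Da}^2$.

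To handle this last step, I would write $f^2 = \sum_n b_n n^{-s}$ with Dirichlet convolution coefficients $b_n = \sum_{d \mid n} a_d a_{n/d}$. Cauchy--Schwarz yields $|b_n|^2 \le d(n) \sum_{d \mid n} |a_d|^2 |a_{n/d}|^2$, and rearranging the double sum over divisors gives
\[ \|f^2\|_{\mathscr{D}_{\alpha-1}}^2 = \sum_n [d(n)]^{\alpha-1} |b_n|^2 \le \sum_{j, k \ge 1} [d(jk)]^\alpha |a_j|^2 |a_k|^2. \]
Submultiplicativity of the divisor function, $d(jk) \le d(j)d(k)$, follows at once from the prime-by-prime comparison $(v_p(j)+1)(v_p(k)+1) \ge v_p(j)+v_p(k)+1$. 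Raising to the $\alpha$-th power separates the double sum and identifies the bound as $\|f\|_{\Da}^4$, completing the induction.

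I do not expect a serious obstacle: the ingredients are the squaring identity for $L^p(\T^\infty)$ norms, Cauchy--Schwarz over the divisors of $n$, and the submultiplicativity of $d$. The conceptual point is that $d(n)$, being precisely the number of terms contributing to the $n$-th Dirichlet coefficient of $f^2$, enters with exactly the right weight to balance the doubling $2^\alpha \to 2^{\alpha+1}$ in the $\Hp$ exponent at each inductive step, so that the inequalities telescope cleanly with constant $1$.
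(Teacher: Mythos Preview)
Your proof is correct and follows essentially the same approach as the paper: induction on $\alpha$, the squaring identity $\|f\|_{\mathscr{H}^{2p}}^2=\|f^2\|_{\Hp}$, Cauchy--Schwarz over the divisors of $n$ to bound $|b_n|^2$, and submultiplicativity $d(jk)\le d(j)d(k)$ to separate the resulting double sum. The paper's write-up is more compressed but the logic is identical.
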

\begin{proof}
We prove the lemma by induction on $\alpha$. The statement is a
tautology when $\alpha=0$. Assume it holds for $\alpha=k$. This
means that if we write $g=f^2$ with $g(s)=\sum_n b_n n^{-s}$, then
we have
\[ \|f\|_{2^{k+1}}^4= \|g\|_{2^k}^{2} \le \sum_n |b_n|^2 [d(n)]^k.\]
Writing $f(s)=\sum_n a_n n^{-s}$ and using the Cauchy--Schwarz
inequality, we obtain
\[ |b_n|^2\le d(n) \sum_{k|n} |a_k|^2 |a_{n/k}|^2.\]
The result follows since $d(lk)\le d(l) d(k)$.
\end{proof}

We denote by $[\alpha]$ the integer part of $\alpha$ and obtain from
Lemma~\ref{integer} the following result for general $\alpha>0$.
\begin{theorem}
For $\alpha>0$, the space $\Da$ is contractively embedded in $\Hab$.
\end{theorem}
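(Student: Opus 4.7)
The plan is to interpolate complex-analytically between the two consecutive integer-order contractive embeddings supplied by Lemma~\ref{integer}, in the spirit of Stein's interpolation theorem. Write $k=[\alpha]$ and $\theta=\alpha-k\in[0,1)$. The target exponent $p=2^{k+2}/(2-\theta)$ is precisely the Calder\'on interpolant between $2^{k+1}$ and $2^{k+2}$ with parameter $\theta$, meaning that $1/p=(1-\theta)/2^{k+1}+\theta/2^{k+2}$. On the source side, the weighted $\ell^2$-scale interpolates as $\mathscr{D}_{k+\theta}=\Da$, so the numerology agrees automatically.

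For a Dirichlet polynomial $f=\sum_{n\le M} a_n n^{-s}$ with $\|f\|_{\Da}=1$, set $b_n=a_n[d(n)]^{\alpha/2}$ so that $\sum_{n\le M}|b_n|^2=1$, and introduce the entire family of Dirichlet polynomials
\[ f_z(s)=\sum_{n\le M} b_n [d(n)]^{-(k+z)/2}\, n^{-s}, \qquad 0\le\operatorname{Re}z\le 1, \]
for which $f_\theta=f$. A one-line calculation gives $\|f_{it}\|_{\mathscr{D}_k}=\|f_{1+it}\|_{\mathscr{D}_{k+1}}=1$ for every real $t$; Lemma~\ref{integer} then yields the two edge estimates $\|f_{it}\|_{\mathscr{H}^{2^{k+1}}}\le 1$ and $\|f_{1+it}\|_{\mathscr{H}^{2^{k+2}}}\le 1$.

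The desired inequality $\|f\|_{\Hab}=\|f_\theta\|_{\Hab}\le 1$ now follows from Stein's interpolation theorem applied to the vector-valued entire function $z\mapsto f_z$: via Bohr's correspondence, each $\mathscr{H}^q$ sits isometrically inside $L^q(\mathbb{T}^\infty)$, so pairing $f_z$ against a Stein-type companion family $g_z$ built from a unit-norm dual test function $g\in \mathscr{H}^{p'}$ produces a bounded analytic function on the strip whose boundary values are controlled by the two edge estimates via Hadamard's three-lines lemma. The contractive embedding on all of $\Da$ then follows from the density of Dirichlet polynomials.

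The only nontrivial step is the execution of the three-lines argument on the polytorus $\mathbb{T}^\infty$, which is where I expect the main subtlety to lie. Because every Dirichlet polynomial lifts, under Bohr's correspondence, to an $L^q$-function on a finite-dimensional subtorus $\mathbb{T}^d$ (with $d$ the number of primes dividing the indices $n\le M$), the classical Calder\'on--Stein construction $g_z=|g|^{p'/p'(z)}\operatorname{sgn}(g)$ is applicable without modification, and the standard three-lines argument on $\mathbb{T}^d$ closes the proof.
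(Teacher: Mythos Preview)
Your argument coincides with the paper's: both build the analytic family $f_z$ by varying the exponent of $d(n)$, invoke Lemma~\ref{integer} at the two integer endpoints $k=[\alpha]$ and $k+1$, pair against a Stein-type companion $g_z$, and apply the three-lines lemma on the torus. The one slip---writing $g\in\mathscr{H}^{p'}$ rather than $g\in L^{p'}(\T^d)$ (equivalently, a simple function on $\T^\infty$, as the paper puts it)---is harmless, since your final paragraph correctly invokes the standard Calder\'on--Stein construction, which uses general $L^{p'}$ test functions.
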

\begin{proof}
We use an argument similar to the one used in the proof of the
Riesz--Thorin theorem. Suppose
\[ f_{\alpha}(s)= \sum_n a_n n^{-s} \]
has norm 1 in $\Da$. Then the function
\[ f_z (s)= \sum_n a_n [d(n)]^{\alpha/2 - z/2}  n^{-s}\]
has norm bounded by 1 in $\Dx$, where $z=x+iy$. We take the inner
product between $f_z$ and $g |g|^{az+b-1}$ for an arbitrary function
$g$ on $\T^\infty$ taking finitely many values, where the parameters
$a$ and $b$ are chosen such that \[ a [\alpha] + b
=1-2^{-([\alpha]+1)} \ \ \ \text{and} \ \ \ a ([\alpha]+1) + b
=1-2^{-([\alpha]+2)}.\] This means that \[ a=2^{-([\alpha]+2)} \ \ \
\text{and} \ \ \ b=1-2^{-([\alpha]+2)}([\alpha]+2).\] We now
conclude from the preceding lemma and the three lines lemma that
$\|f_\alpha\|_p\le 1 $ when
\[ 1-1/p=a \alpha + b, \]
or in other words when $p=2^{[\alpha]+2}(2+[\alpha]-\alpha)^{-1}$.
\end{proof}

For $p=2^{k+1}$ and $k$ a positive integer, we get that if $S$ is a
sequence of bounded numbers belonging to $Z(D_{1-2/p}(\C^+_{1/2}))$,
then there is a nontrivial function in $\Hp$ vanishing on $S$. For
general $p$, the criterion becomes a bit more cumbersome, and we
state it explicitly only for $2<p<4$: If a bounded sequence $S$ is
in $Z(D_{1-2^{-2+4/p}}(\C^+_{1/2}))$, then there is a nontrivial
function in $\Hp$ vanishing on $S$.

It may be objected that we have found a rather indirect route to
arrive at these results for $\Hp$. Our method of proof reflects
that, in general, it remains a challenge to develop techniques that
would allow a more direct approach to the study of $\Hp$.

\vspace{2mm}

\noindent \textbf{Acknowledgements.} I am grateful to Jan-Fredrik
Olsen, Joaquim Ortega-Cerd\`{a}, Eero Saksman, and Jordi Pau for
helpful comments on the subject matter of this paper.

\end{document}